\newtheorem{theorem}{Theorem}
\newtheorem{proposition}[theorem]{Proposition}
\newtheorem{lemma}[theorem]{Lemma}
\newtheorem{corollary}[theorem]{Corollary}
\theoremstyle{definition}
\newtheorem{definition}[theorem]{Definition}
\theoremstyle{remark}
\newtheorem{rem}{Remark}
\renewcommand{\(}{\left(}
\renewcommand{\)}{\right)}
\newcommand{\rcp}[1]{\frac{1}{#1}}
\newcommand{\bth}{\begin{theorem}}
    \newcommand{\eth}{\end{theorem}}
\newcommand{\bl}{\begin{lemma}}
    \newcommand{\el}{\end{lemma}}
\newcommand{\bp}{\begin{proposition}}
    \newcommand{\ep}{\end{proposition}}
\newcommand{\bdf}{\begin{definition}}
    \newcommand{\edf}{\end{definition}}
\newcommand{\brem}{\begin{rem}}
    \newcommand{\erem}{\end{rem}}
\newcommand{\bcor}{\begin{corollary}}
    \newcommand{\ecor}{\end{corollary}}
\newcommand{\bpf}{\begin{proof}}
    \newcommand{\epf}{\end{proof}}
\newcommand{\eps}{\varepsilon}
\newcommand{\Sc}{\scriptstyle}
\newcommand{\simn}{\underset{n\rightarrow \infty}\sim}
\begin{document}
    
    \title{On the Number of Increasing Trees with Label Repetitions}
    \date{\today}
    \thanks{This work was partially supported by the ANR project {\em MetACOnc}
        ANR-15-CE40-0014, by the Austrian Science Fund (FWF) grant SFB F50-03, 
        the PHC Amadeus project 39454SF, and the National Research Foundation of South Africa, grant number 96236.}
    
    \author{Olivier Bodini}
    \address{Laboratoire d'Informatique de Paris-Nord,
        CNRS UMR 7030 - Institut Galil\'ee - Universit\'e Paris-Nord,
        99, avenue Jean-Baptiste Cl\'ement, 93430 Villetaneuse, France.} 
    \email{Olivier.Bodini@lipn.univ-paris13.fr}
    
    \author{Antoine Genitrini}
    \address{Sorbonne Universit\'e,	CNRS,
        Laboratoire d'Informatique de Paris 6 -LIP6- UMR 7606, F-75005 Paris, France.}
    \email{Antoine.Genitrini@lip6.fr}
    
    \author{Bernhard Gittenberger}
    \address{Department of Discrete Mathematics and Geometry, 
        Technische Universit\"at Wien, Wiedner Hauptstra\ss e 8-10/104, 1040 Wien, Austria.}
    \email{gittenberger@dmg.tuwien.ac.at}
    
    \author{Stephan Wagner}
    \address{Department of Mathematical Sciences, Mathematics Division, Stellenbosch University,
        Private Bag X1, 7602, Matieland, South Africa.}
    \email{swagner@sun.ac.za}

\begin{abstract}
    We study the asymptotic number of certain monotonically labeled increasing trees arising
    from a generalized evolution process.  The main difference between the presented model
    and the classical model of binary increasing trees is that the same label can appear in
    distinct branches of the tree.
    
    In the course of the analysis we develop a method to extract asymptotic information on
    the coefficients of purely formal power series. The method is based on an approximate
    Borel transform (or, more generally, Mittag-Leffler transform) which enables us to quickly
    guess the exponential growth rate. With this guess the sequence is then rescaled and a
    singularity analysis of the generating function of the scaled counting sequence yields
    accurate asymptotics. The actual analysis is based on differential equations and a
    Tauberian argument.
    
    The counting problem for trees of size $n$ exhibits interesting asymptotics involving
    powers of $n$ with irrational exponents.

    \medskip\noindent\textsc{Keywords: }
    Increasing tree; Borel transform; Evolution process; Ordinary differential
    equation; Asymptotic enumeration.
\end{abstract}

\maketitle


\section{Introduction}

A rooted binary plane tree of size $n$ (meaning that it has $n$ vertices) is called
\emph{monotonically increasingly labeled} with the integers in $\{1, 2,\dots, k\}$ if the vertices
of the tree are labeled with those integers and each sequence of labels along a path from
the root to any leaf is weakly increasing. The concept of a monotonically labeled tree (of fixed
arity $t\ge 2$) has been introduced in the 1980ies by Prodinger and Urbanek~\cite{PU83} and has
then been revisited by Blieberger~\cite{Blieberger87} in the context of Motzkin trees. In the
latter paper, monotonically labeled Motzkin trees are directly related to the enumeration of
expression trees that are built during compilation or in symbolic manipulation systems.

A rooted binary plane tree of size $n$ is called \emph{increasingly labeled}, if it is
monotonically increasingly labeled with the integers in $\{1, 2,\dots, n\}$ and each integer from
$1$ to $n$ appears exactly once. In particular, this implies that the sequences of labels along
the branches are strictly increasing. This model corresponds to the heap data structure in
computer science and is also related to the classical binary search tree model.

In this paper we are interested in a model lying between the two previous ones.  It is in fact
a special subclass of monotonically labeled increasing trees:  \emph{each sequence of labels from
    the root to any leaf is strictly increasing and each integer between $1$ and $k$ must appear in
    the tree}, where $k$ is the largest label. The main difference from the classical model of binary
increasing trees (\emph{cf.} e.g. the book of Drmota~\cite{Drmota09}) is that the same label can
appear in distinct branches of the tree.  Our interest in such a model arises from the following
fact: There is a classical \emph{evolution process}, presented for example in~\cite{Drmota09},
to grow a binary increasing tree by replacing at each step an unlabeled leaf by an internal node
labeled by the step number and attached to two new leaves. Here, we extend the process by
selecting at each step a subset of leaves and replacing each of them by the same structure (the
labeled internal nodes, all with the same integer label, and their two children), thus an
increasing binary tree with repetitions is under construction.

Such a model may serve to describe population evolution processes where each individual can give
birth to two descendants independently of the other individuals. In another paper~\cite{BGN19}
we have presented an increasing model with repetitions of Schröder trees that encodes the
chronology in phylogenetic trees.

Finally, by merging the nodes with the same label we obtain directed acyclic graphs whose nodes
are increasingly labeled (without repetitions). Such an approach introduces a new model of
concurrent processes with synchronization that induces processes whose description is more
expressive than the classical series-parallel model that we have studied
in~\cite{BDGP17,BDGP17bis}.

\medskip
Another main feature of this paper is the methodological aspect. While we are ``only'' presenting
a first order asymptotic analysis of the counting sequence, we introduce an approach to deal with
generating functions which are on the one hand given by some nontrivial functional equation, on
the other hand purely formal power series.

Our approach falls into the guess-and-prove paradigm, which is frequently used in algebraic
combinatorics and partially automatized there. We will apply an approximate Borel transform to
obtain heuristically an equation for the transformed generating functions. Then we use a scaling
indicated by the heuristics in order to deal with a moderately growing sequence. The generating
function of the scaled sequence then admits an asymptotic analysis which leads to the asymptotic
evaluation of the sequence, including a proof of the guessed result after all.

\subsubsection*{Outline of the paper} 

In Section~\ref{basics} we introduce the concept of increasing binary trees with repetitions.
In particular, we develop the evolution process naturally defining such trees and present the
asymptotic behavior of its enumeration sequence.

Section~\ref{binary} is devoted to the asymptotic study of the number of increasing binary trees
with repetitions of size $n$. In the first instance, we partition the problem and get some
recurrence relation. Then we present the methodology based on the approximate Borel transform.
Afterwards, we derive the functional equation from the evolution process and then analyze the
counting sequence, first heuristically and, after having gained the insight from the heuristics,
then exactly.

Then, in Section~\ref{k_ary} we present a brief discussion of the generalization to $k$-ary trees.

\section{Basic concepts and statement of the main result}\label{basics}

The concept of an \emph{increasing tree} is well studied in the literature (\emph{cf.} for example
\cite{Drmota09}).  An increasing tree is defined as a rooted labeled tree where on each path from
the root to a leaf the sequence of labels is increasing.  In fact they are strictly increasing,
since the nodes of a labeled tree with $n$ nodes carry exactly the labels $1,2,\dots,n$. The aim
of the paper is to introduce a weaker model of increasing trees where repetitions of the labels
can appear.

\begin{definition}
    A \emph{weakly increasing binary tree} is
    \begin{itemize}
        \item a binary tree that is not necessarily complete, \emph{i.e.}, the nodes have arity~0,~1 (with two possibilities:
        either a left child or a right one) or 2;
        \item the nodes are labeled according to the following constraints:
        \begin{itemize}
            \item If a node has label $k$, then all integers from 1 to $k-1$ appear as
            labels in the tree. The set of labels is therefore a complete interval of integers 
            of the form $\{1,2,\dots,m\}$ where $m$ is the maximal label occurring in the tree. 
            \item Along each branch, starting from the root, the sequence of labels is (strictly) increasing.
        \end{itemize}
    \end{itemize}
\end{definition}

We can complete an increasing binary tree with repetitions by plugging to each node whose arity
is smaller than $2$ either one or two leaves (without any label) to reach arity 2 for all the
labeled nodes.  We define the size of an increasing binary tree with repetitions as the number
of leaves in the completed binary tree.  This definition of the size will be completely natural
once we will have introduced the way of constructing such trees.

In Figure~\ref{fig:bin-tree} a tree and its associated completed tree are
represented.  Their common size is~$8$.  If we want to expand the tree further, some of the
$\bullet$-leaves will take the label $5$.

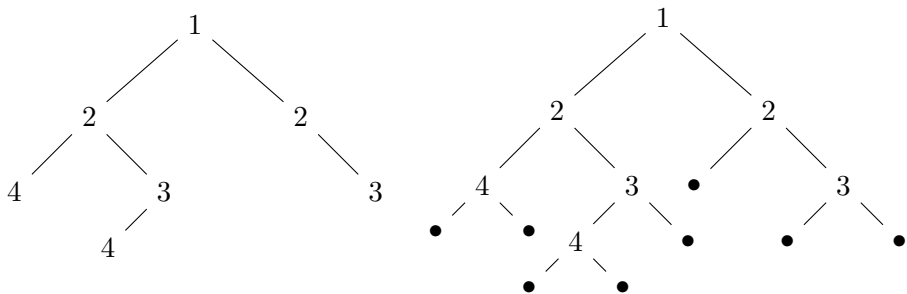
\begin{figure}[htb]
    \begin{center}
        \begin{tabular}{c c c}
            \hspace*{-5mm}
            \begin{tikzpicture}[node distance=35pt]
            \node(a) {$1$};
            \node[below of=a] (aa) {};
            \node[left of=aa, node distance=40pt] (b) {$2$};
            \draw (a) -- (b);
            \node[below right of=b, node distance=40pt] (c) {$3$};
            \draw (b) -- (c);
            \node[below left of=c, node distance=30pt] (cc) {$4$};
            \draw (c) -- (cc);
            \node[below left of=cc, node distance=25pt] (ee) {};
            \node[below left of=b, node distance=40pt] (e) {$4$};
            \draw (b) -- (e);		
            \node[right of=aa, node distance=40pt] (d) {$2$};
            \draw (a) -- (d);
            \node[below right of=d, node distance=40pt] (ddd) {$3$};
            \draw (d) -- (ddd);
            \end{tikzpicture}
            & 
            \begin{tikzpicture}[node distance=35pt]
            \node(a) {$1$};
            \node[below of=a] (aa) {};
            \node[left of=aa, node distance=40pt] (b) {$2$};
            \draw (a) -- (b);
            \node[below right of=b, node distance=40pt] (c) {$3$};
            \draw (b) -- (c);
            \node[below left of=c, node distance=30pt] (cc) {$4$};
            \draw (c) -- (cc);
            \node[below left of=cc, node distance=25pt] (ee) {$\bullet$};
            \draw (cc) -- (ee);
            \node[below right of=cc, node distance=25pt] (eee) {$\bullet$};
            \draw (cc) -- (eee);			
            \node[below right of=c, node distance=30pt] (ccc) {$\bullet$};
            \draw (c) -- (ccc);			
            \node[below left of=b, node distance=40pt] (e) {$4$};
            \draw (b) -- (e);		
            \node[below left of=e, node distance=25pt] (ee) {$\bullet$};
            \draw (e) -- (ee);
            \node[below right of=e, node distance=25pt] (eee) {$\bullet$};
            \draw (e) -- (eee);				
            \node[right of=aa, node distance=40pt] (d) {$2$};
            \draw (a) -- (d);
            \node[below left of=d, node distance=40pt] (dd) {$\bullet$};
            \draw (d) -- (dd);
            \node[below right of=d, node distance=40pt] (ddd) {$3$};
            \draw (d) -- (ddd);
            \node[below left of=ddd, node distance=30pt] (ee) {$\bullet$};
            \draw (ddd) -- (ee);
            \node[below right of=ddd, node distance=30pt] (eee) {$\bullet$};
            \draw (ddd) -- (eee);
            \end{tikzpicture}
        \end{tabular}
    \end{center}
    \caption{Left: a weakly increasing tree with $7$ nodes and $4$ distinct labels;
        Right: its associated completed tree. 
        \label{fig:bin-tree}}
\end{figure}

Let us introduce a \emph{combinatorial evolution process} to build weakly increasing trees.
In order to construct a weakly increasing tree whose greatest label is $m$, 
start with the weakly increasing tree of size 2 (a root with label $1$)
and repeat the following step $(m-1)$ times. At step $i \in \{1, \dots, m-1\}$,
choose a non-empty subset of $\bullet$-leaves from the current completed weakly increasing tree
and replace each of them by a tree with root labeled by $(i+1)$ and two $\bullet$-leaves.
At the end of the process remove the $\bullet$-leaves to formally obtain a weakly increasing tree.

\begin{lemma}\label{lem:single_process}
    Given a weakly increasing tree, there is a single combinatorial evolution process
    that builds it.
\end{lemma}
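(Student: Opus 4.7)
The plan is to prove uniqueness by showing that the sequence of subsets chosen during the evolution process can be read off directly from the final tree. Since step $i$ (for $i\in\{1,\ldots,m-1\}$) is the only step at which the label $i+1$ is introduced, any node in the final tree carrying the label $i+1$ must have been one of the $\bullet$-leaves selected at that step. Thus, given the tree $T$, the candidate subset $S_i$ is forced: it is the set of positions (in the completed tree) occupied by nodes labeled $i+1$. This already gives uniqueness; what remains is to verify that this reconstruction is consistent, i.e.\ that it indeed corresponds to a legal execution of the process.

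The key point is that the process only ever creates new nodes in place of existing $\bullet$-leaves and never modifies nodes that are already labeled, so positions in intermediate trees embed canonically into the completed version of $T$. Under this embedding, I would argue by induction on $i$ that after performing steps $1,\ldots,i-1$ with the subsets $S_1,\ldots,S_{i-1}$ defined above, the tree produced coincides exactly with the subtree of the completed $T$ induced by the labels $1,\ldots,i$ (with the remaining positions being $\bullet$-leaves). The strict-increase property of the labels along branches guarantees that every node labeled $i+1$ in $T$ has all its ancestors labeled strictly less than $i+1$, hence at most $i$; therefore its position is already present and is a $\bullet$-leaf at the start of step $i$, making the application of $S_i$ legal. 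Non-emptiness of each $S_i$ follows from the completeness condition in the definition: every integer in $\{1,\ldots,m\}$ must appear as a label.

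The only real delicate point, and thus the main obstacle, is being precise about the identification between positions in the intermediate trees and positions in the final completed tree; once this identification is set up cleanly, both existence of the process (which is essentially given already by the problem statement) and its uniqueness follow immediately, and the lemma is proved.
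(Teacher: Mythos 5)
Your argument is correct and is precisely the intended one: the paper itself states Lemma~\ref{lem:single_process} without proof, treating as evident exactly the observation you make, namely that the subset chosen at step $i$ is forced to be the set of nodes labeled $i+1$, and that the strictly increasing labels along branches together with the completeness of the label interval $\{1,\dots,m\}$ make this reconstruction a legal, non-empty choice at every step. The identification of positions across intermediate trees that you flag as the delicate point is indeed harmless, since the process only ever replaces $\bullet$-leaves and never alters already-labeled nodes.
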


The combinatorial structure of weakly increasing trees being now formally defined, we denote by
$B_n$ the number of weakly increasing trees of size $n$.  We will prove the following quantitative
result.

\begin{theorem}\label{theo:nb_bin_tree}
    The number of weakly increasing binary trees of size $n$ is
    asymptotically given by 
    \[
    B_n \simn \eta \; n^{-\ln 2} \(\rcp{\ln 2}\)^n  (n-1)!
    \]
    where $\eta \approx 0.647852\dots$.
\end{theorem}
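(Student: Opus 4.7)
The plan is to derive a functional equation for the ordinary generating function $G(u)=\sum_{n\geq 2} B_n u^n$, treated as a formal power series. Refining $G_m(u)=\sum_\ell B_{m,\ell} u^\ell$ by the maximum label~$m$, one evolution step from $m-1$ to $m$ chooses a nonempty subset of the $\ell'$ leaves of the current tree and promotes each, which translates into $G_m(u) = G_{m-1}(u+u^2)-G_{m-1}(u)$ with $G_1(u)=u^2$; summing over $m$ telescopes to
\[ G(u+u^2)=2G(u)-u^2, \]
equivalently the recurrence $B_N=\sum_{n=\lceil N/2\rceil}^{N-1}\binom{n}{N-n}B_n$ for $N\geq 3$, with $B_2=1$.

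Since $B_n$ grows factorially, $G$ is purely formal. Following the approximate-Borel strategy announced in the introduction, I would substitute the ansatz $B_n\sim \eta\,\rho^{-n} n^{-\gamma}(n-1)!$ into the recurrence. Writing $n=N-j$, the leading contribution of index $j$ is a factor $\rho^{-j}/j!$, so consistency requires $\sum_{j\geq 1}\rho^{-j}/j!=1$, giving $e^{1/\rho}=2$ and hence $\rho=1/\ln 2$. A second-order expansion $\binom{n}{N-n}n!/N!=(1-j^2/N)/j!+O(1/N^2)$ and matching of the $1/N$ terms pins down $\gamma=1+\ln 2$, which is precisely the polynomial correction in the target asymptotic.

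With this guess in hand, I would pass to the Borel transform $\hat G(z)=\sum_n B_n z^n/n!$, analytic on $|z|<\ln 2$ by the heuristic. Because the Borel transform does not commute with the composition $u\mapsto u+u^2$, the equation for $\hat G$ must be obtained from the recurrence rather than from the functional equation itself; this yields
\[ \sum_{M\geq 2}\frac{B_M}{M!}z^M\bigl(2-D_M(z)\bigr)=\frac{z^2}{2},\qquad D_M(z)={}_1F_1(-M;M+1;-z), \]
and the uniform expansion $D_M(z)=e^z-z(1+z)e^z/M+O(1/M^2)$ splits this into a leading $(2-e^z)\hat G(z)$ term plus residual $1/M$ integrals. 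Differentiating once eliminates the integrals and produces a first-order linear ODE of the form $(2-e^z)\hat G'(z)+P(z)\hat G(z)=F(z)$, where $F$ is analytic on a neighborhood of $[0,\ln 2]$ and $P(\ln 2)=2\ln 2$.

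Finally, near $z=\ln 2$ this ODE is regular-singular (since $2-e^z=-2(z-\ln 2)+O((z-\ln 2)^2)$), with indicial exponent $\ln 2$. The integrating factor $(\ln 2-z)^{-\ln 2}$ gives the local form $\hat G(z)=\hat G(\ln 2)-C(1-z/\ln 2)^{\ln 2}+\text{smoother}$, with $C>0$ fixed by integrating the full ODE from~$0$ (using $\hat G(0)=0$). Since $c_n=B_n(\ln 2)^n/n!\geq 0$ and $\hat G(\ln 2)-\hat G(\ln 2\cdot w)\sim C(1-w)^{\ln 2}$ as $w\to 1^-$, a Karamata-type Tauberian theorem gives $c_n\sim -C/\Gamma(-\ln 2)\cdot n^{-1-\ln 2}$, which translates into $B_n\sim \eta\, n^{-\ln 2}(1/\ln 2)^n(n-1)!$ with $\eta=-C/\Gamma(-\ln 2)\approx 0.647852$. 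The principal difficulty is the ODE derivation: because the formal Borel transform of the functional equation is tautological, the ODE for~$\hat G$ must be built at the coefficient level, and one must verify that the $O(1/M^2)$ tails in the hypergeometric expansion contribute only to the analytic part near $z=\ln 2$; once this is in place, the singularity analysis and Tauberian transfer are standard, and the numerical value of $\eta$ follows from numerically integrating the ODE.
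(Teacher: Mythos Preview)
Your overall strategy is essentially the paper's: same functional equation and recurrence, same heuristic for the growth rate, passage to a rescaled generating function, derivation of a first-order linear ODE with a regular singular point, local analysis there, and a Tauberian transfer. The paper works with $b(z)=\sum_n b_n z^n$ where $b_n=B_n(\ln 2)^n/(n-1)!$, whereas you work with the Borel transform $\hat G(z)=\sum_n B_n z^n/n!$; since $b(z)=(z\ln 2)\,\hat G'(z\ln 2)$, the two are equivalent reformulations. Your hypergeometric identity and the expansion $D_M(z)=e^z-z(1+z)e^z/M+O(1/M^2)$ are correct, and after eliminating $\int_0^z \hat G(t)/t\,dt$ between the equation and its derivative one does get a first-order ODE with the indicial exponent $\ln 2$ at $z=\ln 2$.

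The genuine gap is the step you yourself flag as the ``principal difficulty'': controlling the $O(1/M^2)$ remainder. That remainder is $R(z)=\sum_M \tfrac{B_M}{M!}z^M\cdot O(1/M^2)$, and to show that $R$ and $R'$ stay bounded at $z=\ln 2$ you need an \emph{a priori} estimate of the form $B_M(\ln 2)^M/M!=O(M^{-1-\eps})$ for some $\eps>0$. Without it the argument is circular, because that bound is essentially the conclusion you are after. The paper supplies precisely this missing ingredient: Lemma~\ref{gamma_lemma} gives elementary two-sided bounds on the ratio $\gamma_{n,\ell}$, and Lemma~\ref{bnbound} uses them in an induction on the recurrence to prove $b_n\le n^{-\eps}$ outright; only then does Lemma~\ref{bound_an} conclude that the correction series and its derivative are bounded at the singularity. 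A secondary point: because $\hat G$ stays bounded as $z\to\ln 2$ (the singular term has positive exponent $\ln 2$), a direct Karamata argument on $\hat G$ is not available; the paper's $(n-1)!$ normalization is not cosmetic, since it makes $b(z)\sim\beta(1-z)^{-(1-\ln 2)}$ diverge and puts the problem in the classical Tauberian range. Finally, two small slips in your heuristic: with your ansatz the matching condition is $e^{\rho}=2$ (so $\rho=\ln 2$, and $\rho^{-n}=(1/\ln 2)^n$), and the second-order match yields $\gamma=\ln 2$, not $1+\ln 2$.
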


This result can be compared to the number of classical increasing binary trees (without label
repetition) with $(n-1)$ labeled nodes, which equals $(n-1)!$. The number $B_n$ is
exponentially greater than the latter one.  For the classical increasing model, the reader can
refer to Flajolet and Sedgewick's book~\cite[p. 143]{FS09}.

\section{Enumeration of weakly increasing binary trees}\label{binary}

Using the combinatorial evolution process to build trees (described in the previous section) and
its associated Lemma~\ref{lem:single_process}, we get directly a recurrence for the partition of
the set of all weakly increasing trees of size $n$ according to their maximal label $m$:

\begin{align}
B_{1,2} & = 1 \nonumber \\
B_{1,n} & = 0 \qquad \text{if }n\neq 2 \nonumber \\
B_{m,n} & = \sum_{\ell = 1}^{n-m+2} \binom{n-\ell}{\ell} B_{m-1, n-\ell},     \label{B_r_rec}
\end{align}
where $B_{m,n}$ is the number of weakly increasing trees with $n$ nodes in which exactly $m$
distinct labels occur.  We remark that $B_n=\sum_{m\ge 1} B_{m,n}$, and thus the first terms of
$(B_n)_{n\ge 0}$ are

{
    \small
    \[0,0,1,2,7,34,214,1652,15121,160110,1925442,25924260,386354366,6314171932,\dots\]
}

The first terms of our sequence coincide with those of a shifted version of the sequence
\texttt{A171792} in OEIS\footnote{OEIS means \emph{Online Encyclopedia of Integer Sequences} that
    can be reached at \\\url{http://oeis.org/}}.  Some properties of this sequence are stated there,
but no combinatorial meaning is given.

\subsection{Methodology of the approach}\label{meth}

We start with deriving a functional equation for the generating function.  As we are not dealing with
labeled structures in the sense of \cite[Chapter~II]{FS09} (where no repetitions are allowed),
we cannot apply the symbolic method for labeled structures to obtain a functional equation.
Instead, we use the evolution process mentioned in the introduction and the symbolic method for
unlabeled structures, which yields a functional equation for the ordinary generating function
after all. As this function is a purely formal series, no analytic methods apply.

A Borel transform makes the power series become analytic in some vicinity of the origin, as it
transforms a power series $\sum_n a_n z^n$ into $\sum_n a_n z^n/n!$. As we
cannot calculate the Borel transform of the functional equation we obtained, we will use an
approximate Borel transform: First, we transform the functional equation into a recurrence
relation. Then we guess that the Borel transform of the generating function has a positive radius
of convergence and replace $B_n$ by the approximation $n!\alpha^n$, from which we derive a simple
functional for the exponential generating function. This function does not appear as a closed
form expression on both sides of the equation. But as the terms appearing in the summation contain
some rapidly growing sequences, we guess that only a few terms are actually asymptotically
relevant. Then we simplify and extend the range of summation. Actually, we prune a major part of
the sum, then simplify the remaining terms, and then replace the pruned summands by a sum over
simplified terms. This resembles the idea of the saddle point method for complex contour integrals
(\emph{cf.}~\cite[Sec.~VIII.3]{FS09}), where the integrand is approximated locally and the tails
of the integral are exchanged.

Now we are faced with a rather simple equation from which we can read off the dominant
singularity, \emph{i.e.}, the singularity which lies closest to the origin and which reveals the
exponential growth rate of the coefficients of the generating function. Thus we guess the
exponential growth rate of $B_n/n!$, more precisely, we guess that $\ln B_n \sim \ln( n!
\alpha^n)$ with the value $\alpha$ we just obtained.

This guess now indicates that the suitably scaled sequence has only polynomial growth and hence
its generating function is amenable to a singularity analysis. Indeed, we will prove an \emph{a
    priori} bound for the scaled sequence and derive then a first-order differential equation for its
generating function. Finally, a singularity analysis and a Tauberian theorem yield the asymptotic
equivalent of the scaled sequence, and thus for $B_n$ as well. This includes a proof of the
guessed exponential growth rate as well. 

\brem 
We remark that the same methodology applies when a Mittag-Leffler transform is used to make the
power series analytic. The Mittag-Leffler transform was introduced by Mittag-Leffler~\cite{ML1903}
and is a generalization of the Borel transform,
which transforms a power series $\sum_n a_n z^n$ into $\sum_n a_n z^n/\Gamma(1+\alpha n)$ for some
specific $\alpha$. This roughly corresponds to replacing the $n!$ in the Borel transformed series
by $n!^\alpha$. Further information and more recent developments concerning the Mittag-Leffler
transform can be found in \cite{Za96} 
\erem

\subsection{The generating function of the counting sequence}

Let us now introduce the \emph{ordinary generating series} $B(z)$ associated to the sequence $B_n$:
\[
B(z) = \sum_{n\geq 0} B_n z^n.
\]
The variable $z$ marks the $\bullet$-leaves in the weakly increasing trees.  Although the trees
are labeled, we use an ordinary generating series, because in this context the evolution process
directly turns into a combinatorial specification and hence a functional equation satisfied by
the series $B(z)$.  Recall that at each step some $\bullet$-leaves are replaced by a deterministic
labeled node with two $\bullet$-leaves; thus we get
\begin{equation} \label{prev_funeq}
B(z) = z^2 + B(z + z^2) - B(z).
\end{equation} 
In fact, a tree is either the smallest tree (the root labeled by $1$ with two $\bullet$-leaves)
or it is obtained after the expansion of a tree where some $\bullet$-leaves do not change, and
the other ones are replaced by a labeled node and two $\bullet$-leaves: $z\rightarrow z^2$).
But at each step at least one leaf must be chosen, thus we must remove the trees where no leaf
has been chosen; these have generating function $B(z)$. The functional 
equation~\eqref{prev_funeq} can be rewritten as
\begin{equation}\label{eq:function_bin_trees}
B(z) = \frac{1}{2} \left( z^2 + B\left(z + z^2\right) \right).
\end{equation}
Computing the coefficients from Equation~\eqref{eq:function_bin_trees}, we prove that our
sequence $\left(B_n\right)_{n\ge 0}$ is indeed a shifted version of OEIS \texttt{A171792}.

It is remarkable that the most natural description here uses ordinary generating functions, unlike
the exponential ones that are used for classical increasing trees. As mentioned in
Section~\ref{meth}, the previous section, the reason is that due to the label repetitions we
cannot specify this class directly, but only using the evolution process, which puts us into the
world of ordinary generating functions. We may apply the combinatorial Borel transform on
equation~\eqref{eq:function_bin_trees}, which translates $B(z)$ into its exponential counterpart.
But this does neither reveal another natural combinatorial way for defining weakly increasing
trees nor turn the functional equation~\eqref{eq:function_bin_trees} into a simple one for the
exponential generating function. For deriving the asymptotics, we will now appeal to the approach
outlined in Section~\ref{meth}.

\subsection{Analysis of the functional equation -- heuristics} 

Now we turn to the actual enumeration problem which amounts to the analysis of the function given
in~\eqref{eq:function_bin_trees}. First, we read off coefficients in~\eqref{eq:function_bin_trees}
and get a recurrence relation for $B_n=[z^n] B(z)$, which is, of course, in compliance with
\eqref{B_r_rec}: starting with $B_2=1$, we then get
\begin{align} \label{B_n_rec}
B_n&=\sum_{\ell=1}^{\lfloor\frac n2\rfloor} \binom{n-\ell}{\ell} B_{n-\ell} \\
&=\sum_{p=n-\lfloor\frac n2\rfloor}^{n-1}\binom{p}{n-p} B_p. \label{B_p}
\end{align} 

First, let us introduce a combinatorial interpretation of Equation~\eqref{B_n_rec}, which enables
us to directly deduce \eqref{B_p} from \eqref{B_n_rec}. In the first recurrence, we state that
a tree with $n$ leaves is obtained by extending a tree with $n-\ell$ leaves in which we choose
$\ell$ leaves, each one being then replaced by an internal node (with a deterministic label
induced by the step number in the construction) to which two leaves are attached.

Looking at this recurrence, we immediately observe that $B_n\ge (n-1)!$, thus $B(z)$ is only a
formal power series. To get a first guess of the asymptotic behavior of $B_n$, we start with the
following heuristic (first step of the approximate Borel transform): 
\emph{Assume that $B_n \simn \alpha^n \; n!$, for some $\alpha>0$.}

Then the asymptotic analysis of $B_n$ could be done by a singularity analysis 
(see in particular~\cite{FO90,FS09}) of the exponential generating function
\[
\hat B(z)=\sum_{n\ge 0} B_n \frac{z^n}{n!}.
\]
Set $\phi_n:=\alpha^n n!$ and add $B_n$ to both sides of equation~\eqref{B_p}. This gives,
when summing up over all $n$ and assuming $B_n=\phi_n$, based on the left-hand side of~\eqref{B_p}
\[
2\sum_{n\ge 0} B_n \frac{z^n}{n!} = 2\sum_{n\ge 0} \phi_n \frac{z^n}{n!}  
= \frac{2}{1-\alpha z}.
\]
By using the right-hand side of~\eqref{B_p} we deduce
\begin{align*} 
\frac{2}{1-\alpha z} &= \sum_{n\ge 0} \frac{z^n}{n!}
\sum_{p=n-\lfloor\frac n2\rfloor}^{n} \binom{p}{n-p} \phi_p \\
&=\sum_{n\ge 0} z^n \sum_{p=n-\lfloor\frac n2\rfloor}^{n} \frac{\phi_p}{p!} \cdot \frac1{(n-p)!}
\cdot\frac{p!^2}{n!(2p-n)!}.
\end{align*} 
Note that $\frac{p!^2}{n!(2p-n)!} \approx 1$ for $p\approx n$. 
If $p$ is getting smaller then $\frac{p!^2}{n!(2p-n)!}$ rapidly tends to~$0$. And so does
$\frac1{(n-p)!}$. Thus, only the last few terms of the inner sum should already almost give its
value. Thus, let us assume that 
\[
\frac{2}{1-\alpha z} \sim \sum_{n\ge 0} z^n \sum_{p=0}^{n} \frac{\phi_p}{p!} \cdot
\frac1{(n-p)!} =\frac{e^z}{1-\alpha z}.
\]
But now, we see that both generating functions have a unique dominant singularity at $1/\alpha$
and they are approximately the same function. Thus, as $z\to 1/\alpha$, we must have that $2\sim
e^{1/\alpha}$, which yields $\alpha=1/\ln 2\approx 1.442695041\dots$. 

Note, that the reasoning above is only heuristic. There are many inaccuracies in our
arguments, so we have not proved anything so far. However, comparing $(\ln 2)^{-n} \; n!$ with
the first $1000$ values of $(B_n)$ indicates that $B_n\sim b_n (\ln 2)^{-n} (n-1)! $ where $b_n\to 0$ at
a slower rate than $1/n$. In Figure~\ref{fig:asympt_approx} the normalized values $B_n / ( (\ln 2)^{-n} (n-1)!  )$
are represented by the blue curve. The green one represents the function $n \mapsto 1 / \sqrt{n}$
and the red one is for the function $n \mapsto 1 / n$. The representations are given for $n=25\dots 1000$.
\begin{figure}[htb]
    \begin{center}
        \includegraphics[width=0.7\textwidth]{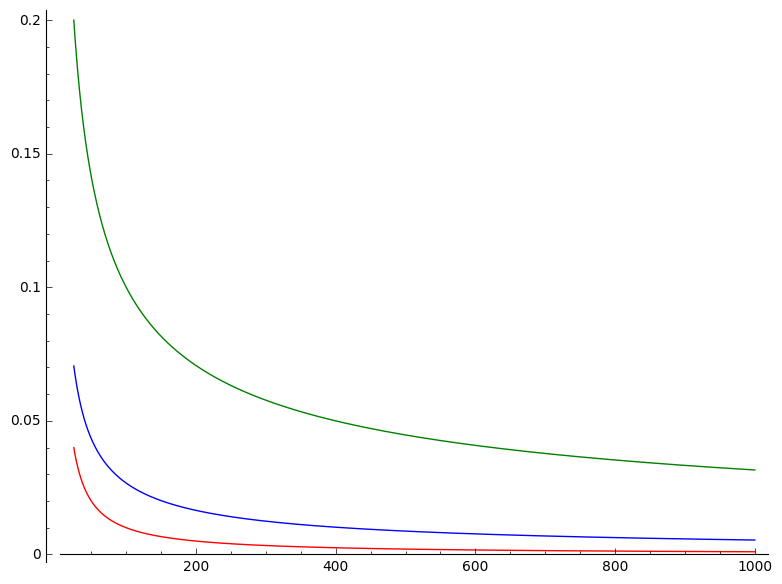}
    \end{center}
    \caption{The blue curve corresponds to the function $n \mapsto B_n / ( (\ln 2)^{-n} (n-1)!  )$.
        \label{fig:asympt_approx}}
\end{figure}

\subsection{Analysis of the functional equation -- asymptotics}

With the heuristic observation of the last section in mind, we scale the original counting
sequence and define a new sequence $(b_n)_{n\ge 2}$ by
\[
b_n:= \frac{B_n}{(\ln 2)^{-n}(n-1)!} 
\]
and set $b_0=b_1=0$. If we set $\alpha=1/\ln2$, then the recurrence~\eqref{B_n_rec} becomes 
\begin{equation} \label{bnrec}
b_n = \sum_{\ell=1}^{\lfloor\frac n2\rfloor}  \frac{\alpha^{-\ell}}{\ell!} \cdot
\frac{(n-\ell)(n-\ell-1)\cdots (n-2\ell+1)}{(n-1)(n-2)\cdots (n-\ell)} \cdot b_{n-\ell},
\end{equation} 
where the first values are $b_0=b_1=0$ and $b_2 = (\ln 2)^2$, of course. 
To proceed, we need to analyze the second factor. 
To simplify notations set 
\begin{equation} \label{gammas}
\gamma_{n,\ell} =\frac{(n-\ell)(n-\ell-1)\cdots (n-2\ell+1)}{(n-1)(n-2)\cdots (n-\ell)}.
\end{equation} 
First let us establish tight bounds for these numbers.
\bl \label{gamma_lemma}
For $1 \le \ell < n$ we have
\[
1-\frac{\ell(\ell-1)}n-\frac{\ell(\ell-1)^2}{n^2} \le \gamma_{n,\ell}\le  
1-\frac{\ell(\ell-1)}n+\frac{\ell(\ell-1)^3}{2n^2}.
\]
\el

\bpf
To show the upper bound, we write 
\begin{align} 
\gamma_{n,\ell}&=\(1-\frac{\ell}{n-1}\)\(1-\frac{\ell-1}{n-2}\)\cdots
\(1-\frac{\ell}{n-\ell}\) \nonumber \\ 
&\le \(1-\frac{\ell-1}{n}\)^{\ell}. \label{gamma_aux2}
\end{align} 
Using the fact that $(1-x)^r\le 1-rx+\binom{r}{2} x^2$ for $0<x<1$ and $r\in\mathbb{N}$,
we get the stated upper bound.

Now we turn to the lower bound. 
Let $(H_n)_{n\geq1}$ denote the sequence of harmonic numbers.
If we cancel the factor $n-\ell$ in \eqref{gammas}, we can write 
\[
\gamma_{n,\ell}=\(1-\frac{\ell}{n-1}\)\(1-\frac{\ell}{n-2}\)\cdots \(1-\frac{\ell}{n-\ell+1}\)
\]
which implies 
\begin{align} 
\gamma_{n,\ell}&\ge 1-\ell \(\rcp{n-1}+\rcp{n-2}+\cdots+\rcp{n-\ell+1}\) 
=1-\ell \(H_{n-1}-H_{n-\ell} \). \label{gamma__aux} 
\end{align} 
Recall (\emph{cf.} for example~\cite{TT71}) that the sequence $(H_n - \ln n )_n$
is monotonically decreasing, thus we deduce
\begin{align*} 
\gamma_{n,\ell} &\ge 1-\ell \(\ln (n-1)-\ln(n-\ell)\)=1-\ell\ln\(\frac{n-1}{n-\ell}\) 
= 1-\ell\ln\(\rcp{1-\frac{\ell-1}{n-1}}\)\\
& \ge 1-\ell\ln\(\rcp{1-\frac{\ell-1}{n}}\).
\end{align*} 
Since $\gamma_{n,\ell}$ is defined for $\ell\le\lfloor
n/2\rfloor$, the result now follows from the inequality $\ln\rcp{1-x}\le x+x^2$, which holds for
$0\le x\le 1/2$. 
\epf

Next, we prove a first bound for $b_n$. 



\bl\label{bnbound}
The sequence $(b_n)_{n\ge 0}$ defined by $b_0=b_1=0$, $b_2=(\ln 2)^2$, and 
Equation~\eqref{bnrec} for $n\ge 3$ satisfies for all $n\in \mathbb{N}$ 
\[
0 \le b_n\le \frac{1}{n^{\eps}}
\]
for some sufficiently small $\eps>0$.
\el

\bpf
We use induction on $n$. Apparently, the claimed bound holds for $b_0,b_1,b_2$ if $\eps$ is small
enough. Now assume that $b_k\le \frac{1}{k^{\eps}}$ for $k=1,\dots,n-1$. Using the
recurrence~\eqref{bnrec} together with the bound~\eqref{gamma_aux2} we obtain
\begin{align}
b_n &\le \sum_{\ell=1}^{\lfloor n/2 \rfloor} \frac{\alpha^{-\ell}}{\ell!} 
\cdot \left(1-\frac{\ell-1}{n}\right)^{\ell} \cdot \rcp{(n-\ell)^{\eps}} \nonumber\\
&= \frac1{n^{\eps}}\( \rcp{\alpha}\(1-\rcp n\)^{-\eps} 
+ \sum_{\ell=2}^{\lfloor n/2 \rfloor} \frac{\alpha^{-\ell}}{\ell!} \label{bn_first}
\left(1-\frac{\ell-1}n\right)^{\ell} \(1-\frac{\ell}n\)^{-\eps}\). 
\end{align}
Now observe that for $0\le x\le 1/2$ and small $\eps$ (e.g. $\eps=1/100$) we have
$(1-x)^{-\eps}<1+2\eps x$. Thus we get the estimates 
\[
\(1-\rcp n\)^{-\eps}<1+\frac{2\eps}n,\qquad 
\(1-\frac{\ell}n\)^{-\eps}<1+\frac{2\ell\eps}n \le \(1+\frac{2\eps}n\)^\ell
\]
and in the sum in \eqref{bn_first} we have moreover $\left(1-\frac{\ell-1}n\right)^{\ell}\le
\(1-\rcp n\)^\ell$. Using these bounds and extending the range of summation to infinity, we obtain 
\begin{align*}
b_n \le \frac1{n^{\eps}} \Bigg( & \rcp{\alpha}\(1+\frac{2\eps}n\)+\exp\(\rcp\alpha\(1-\rcp
n\)\(1+\frac{2\eps}n\)\) \\
&  -1-\rcp\alpha\(1-\rcp n\)\(1+\frac{2\eps}n\)\Bigg).
\end{align*}
The exponential function satisfies  
\[
e^x-1-x\le cx^2,\quad\text{ with }\quad c=\frac{1-\ln 2}{\ln(2)^2} \quad \text{ and }\quad 0\le x\le \rcp
\alpha=\ln 2.
\]
Hence we infer (notice that $c=\alpha^2\(1-\rcp\alpha\)$)
\begin{align*} 
b_n &\le \frac1{n^{\eps}} \(\rcp{\alpha}\(1+\frac{2\eps}n\)+\frac{c}{\alpha^2}
\(1-\rcp n\)^2\(1+\frac{2\eps}n\)^2\) \\
&=\frac1{n^{\eps}} \(\rcp{\alpha}\(1+\frac{2\eps}n-\(1+\frac{2\eps}n\)^2\(1-\rcp
n\)^2\)+\(1+\frac{2\eps}n\)^2\(1-\rcp n\)^2\)\\
&\le \frac1{n^{\eps}} \(\rcp{\alpha}\cdot\frac2n\(1+\frac{2\eps}n\)
+\(1+\frac{2\eps}n\)^2\(1-\rcp n\)^2\). 
\end{align*} 
The last factor is an increasing function in $n$. Therefore, replacing it by its limit gives
another upper bound. As this limit is equal to 1, the proof is complete. 
\epf

We are now ready to analyze precisely the asymptotic behavior of $b_n$.
Starting with Equation~\eqref{bnrec}, let us define the correction sequence $(a_n)$ to be
\begin{align*}
a_n =  &\sum_{\ell=1}^{\lfloor\frac n2\rfloor}  \frac{\alpha^{-\ell}}{\ell!} \cdot
\left( \gamma_{n,\ell}
- 1 + \frac{\ell(\ell-1)}{n}\right) \cdot b_{n-\ell} \\
& - \sum_{\ell=\lfloor\frac n2\rfloor+1}^{n} \frac{\alpha^{-\ell}}{\ell!} \cdot
\left( 1 - \frac{\ell(\ell-1)}{n}\right) \cdot b_{n-\ell}.
\end{align*}
We obviously get
\begin{equation} \label{eq:an}
b_n = a_n + \sum_{\ell=1}^n \frac{\alpha^{-\ell}}{\ell!}
\left( 1 - \frac{\ell (\ell-1)}{n} \right) b_{n-\ell}.
\end{equation}
We now associate the generating function $a(z) = \sum_{n\ge0} a_n z^n$.
\begin{lemma}\label{bound_an}
    We have $a_n = O(n^{-2-\eps})$, as $n\to\infty$ and for some $\eps>0$. 
    Thus the functions $a(z)$ and $a'(z)$ are bounded for $z\rightarrow 1^-$.
\end{lemma}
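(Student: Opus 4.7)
The plan is to split the defining sum for $a_n$ into its two pieces and bound each one separately, then deduce the statements about $a(z)$ and $a'(z)$ from term-wise absolute convergence.

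\emph{Main piece} ($1 \le \ell \le \lfloor n/2\rfloor$). From Lemma~\ref{gamma_lemma} I read off the two-sided estimate
\[
\left| \gamma_{n,\ell} - 1 + \frac{\ell(\ell-1)}{n} \right| \;\le\; \frac{\ell(\ell-1)^3}{2n^2} + \frac{\ell(\ell-1)^2}{n^2} \;\le\; \frac{C\,\ell^4}{n^2}
\]
for a fixed constant $C$, valid for all $\ell\ge 1$. Since $\ell\le n/2$ gives $n-\ell\ge n/2$, Lemma~\ref{bnbound} yields $b_{n-\ell} \le (n-\ell)^{-\eps} \le 2^{\eps} n^{-\eps}$. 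Plugging these two estimates into the first sum of $a_n$ and extending the range of summation to $\infty$, I will obtain
\[
\left|\text{first sum}\right| \;\le\; \frac{C\cdot 2^{\eps}}{n^{2+\eps}} \sum_{\ell\ge 1} \frac{\alpha^{-\ell} \ell^4}{\ell!},
\]
and the series on the right converges. This delivers an $O(n^{-2-\eps})$ bound for the main piece.

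\emph{Tail piece} ($\lfloor n/2\rfloor < \ell \le n$). Here the super-exponential decay of $\alpha^{-\ell}/\ell!$ takes over. Using Stirling I bound $\alpha^{-\ell}/\ell! \le (e\alpha^{-1}/\ell)^{\ell} \le (2e\alpha^{-1}/n)^{\ell}$, which for $\ell > n/2$ is smaller than any power of $1/n$. The factor $|1 - \ell(\ell-1)/n|$ is $O(n)$ and the factor $b_{n-\ell}$ is bounded uniformly (again by Lemma~\ref{bnbound}, and is in fact $0$ when $n-\ell\in\{0,1\}$). Summing at most $n/2$ such terms gives a bound which is $o(n^{-K})$ for every $K>0$, hence certainly $O(n^{-2-\eps})$. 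Combining the two pieces yields $a_n = O(n^{-2-\eps})$.

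\emph{Consequences for $a(z)$ and $a'(z)$.} Since $\sum_{n\ge 0} n |a_n| \le \sum_{n\ge 1} O(n^{-1-\eps})$ converges, both series $a(z) = \sum_n a_n z^n$ and $a'(z) = \sum_n n a_n z^{n-1}$ converge absolutely at $z=1$, so by monotonicity of $|z|^n$ on $[0,1]$ both functions stay bounded as $z\to 1^-$.

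The one step that deserves a little care is bounding the first piece uniformly in $\ell$: the estimate from Lemma~\ref{gamma_lemma} is two-sided with different error terms, and small values of $\ell$ (in particular $\ell=1,2$, where the error is actually $O(1/n^2)$) must be covered by the same inequality used for large $\ell$. Absorbing everything into a single constant $C$ and a common factor $\ell^4/n^2$ is the only mildly technical point; the rest is routine.
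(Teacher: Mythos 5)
Your proof is correct and follows essentially the same route as the paper: the published proof simply cites Lemma~\ref{gamma_lemma} for the bound $\gamma_{n,\ell}-1+\ell(\ell-1)/n = O(\ell^4/n^2)$ and Lemma~\ref{bnbound} for the bound on $b_{n-\ell}$, leaving the summation over $\ell$ and the tail $\ell>\lfloor n/2\rfloor$ implicit. You have merely filled in those routine details (convergence of $\sum_\ell \alpha^{-\ell}\ell^4/\ell!$ and the super-polynomial decay of the tail), which is fine.
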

\begin{proof}
    Lemma~\ref{gamma_lemma} gives 
    \[
    \gamma_{n,\ell} - 1 + \frac{\ell(\ell-1)}{n} = O\left( \frac{\ell^4}{n^2} \right).
    \]
    Now using Lemma~\ref{bnbound} completes the proof. 
\end{proof}
By adding the coefficient $b_n$ to each part of Equation~\eqref{eq:an}
and then  multiplying by $n$, we get
\begin{equation}\label{rec:bn}
2n b_n = n a_n + n \sum_{\ell=0}^n \frac{\alpha^{-\ell}}{\ell!} b_{n-\ell}
- \sum_{\ell=0}^n \frac{\alpha^{-\ell}}{\ell!} \ell (\ell-1) b_{n-\ell}.
\end{equation}
Translating this recurrence into the generating series context we obtain
the following differential equation.
\begin{lemma}\label{equadiff_sol}
    The generating function $b(z) = \sum_n b_n z^n$ satisfies
    \[
    \left( 2 - e^{z/\alpha} \right) b'(z) 
    + \left( \frac{z}{\alpha^2} - \frac{1}{\alpha} \right)e^{z/\alpha} b(z)
    = a'(z).
    \]
\end{lemma}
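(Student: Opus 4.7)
The plan is a direct translation of the recurrence \eqref{rec:bn} into the language of generating functions. First I note that by Lemma~\ref{bnbound}, the coefficients $b_n$ are bounded, so $b(z)$ has radius of convergence at least $1$ and all manipulations below are valid both formally and analytically on a neighborhood of $0$. The strategy is to multiply both sides of \eqref{rec:bn} by $z^{n-1}$, sum over $n \ge 0$, and identify each resulting series with a simple expression in $b(z)$, $b'(z)$, and $e^{z/\alpha}$.

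Concretely, I would proceed term by term. The left-hand side of \eqref{rec:bn} yields $\sum_n 2n b_n z^{n-1} = 2 b'(z)$, and the first summand on the right yields $\sum_n n a_n z^{n-1} = a'(z)$. For the second summand, the inner sum is the Cauchy product of the exponential sequence $(\alpha^{-\ell}/\ell!)_\ell$ with $(b_n)_n$; since $\sum_\ell \frac{\alpha^{-\ell}}{\ell!} z^\ell = e^{z/\alpha}$, this convolution has generating function $e^{z/\alpha} b(z)$. Multiplying its coefficients by $n$ and shifting corresponds to differentiation, so this term contributes
\[
\bigl(e^{z/\alpha} b(z)\bigr)' = \frac{1}{\alpha} e^{z/\alpha} b(z) + e^{z/\alpha} b'(z).
\]

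For the last summand, I would use the identity $\sum_\ell \frac{\alpha^{-\ell}}{\ell!} \ell(\ell-1)\, z^\ell = z^2 \bigl(e^{z/\alpha}\bigr)'' = \frac{z^2}{\alpha^2} e^{z/\alpha}$, so that the inner convolution has generating function $\frac{z^2}{\alpha^2} e^{z/\alpha} b(z)$. Since this term enters without a factor of $n$, summing $\sum_n(\cdots) z^{n-1}$ simply divides the generating function by $z$, producing $\frac{z}{\alpha^2} e^{z/\alpha} b(z)$. Combining these contributions gives
\[
2 b'(z) = a'(z) + \frac{1}{\alpha} e^{z/\alpha} b(z) + e^{z/\alpha} b'(z) - \frac{z}{\alpha^2} e^{z/\alpha} b(z),
\]
and collecting terms yields the announced differential equation.

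The computation is routine, with no serious obstacle: the only subtlety is to correctly track which convolutions carry a factor of $n$ (giving a derivative) and which do not (giving a division by $z$), and to handle the third sum carefully so that the $z^2/\alpha^2$ factor from the second derivative of $e^{z/\alpha}$ is divided by $z$ instead of being differentiated. Both sides are actual analytic functions on $\{|z|<1\}$ by Lemma~\ref{bnbound} and Lemma~\ref{bound_an}, so the identity holds in that domain (and trivially as an identity of formal power series).
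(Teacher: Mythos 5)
Your proof is correct and follows essentially the same route as the paper: both translate the recurrence \eqref{rec:bn} term by term into generating functions, using the key identity $\sum_{\ell}\ell(\ell-1)\alpha^{-\ell}z^{\ell}/\ell!=\frac{z^2}{\alpha^2}e^{z/\alpha}$ and the fact that the factor $n$ corresponds to $z\frac{d}{dz}$. The only cosmetic difference is that you multiply by $z^{n-1}$ directly, whereas the paper multiplies by $z^{n}$ and divides the resulting identity by $z$ at the end.
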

\begin{proof}
    First we observe that 
    \[
    \sum_{\ell\ge 0} \ell(\ell-1)\alpha^{-\ell}
    \frac{z^\ell}{\ell!}=\frac{z^2}{\alpha^2}e^{z/\alpha},
    \]
    thus we deduce from Equation~\eqref{rec:bn}
    the following equation in the context of generating functions:
    \[
    2z b'(z) = z a'(z) + z \left( e^{z/\alpha} b(z) \right)'
    - \left(\frac{z}{\alpha}\right)^2 e^{z/\alpha} b(z).
    \]
    which implies the assertion.
\end{proof}

We are now ready to study the behavior of $b(z)$ around its singularity $1$.
\begin{lemma}\label{lem:asympt}
    The generating function $b(z)$ satisfies
    \begin{align*}
    b(z) \underset{z\rightarrow 1^-}\sim \beta  \left( 1-z \right)^{-1 + 1/\alpha }\qquad
    \text{where } \beta = \frac{e^{\pi^2/12}\; \alpha^{1-1/\alpha}}{2^{1-1/(2\alpha)}} \;
    \int_0^1 \frac{f(t) a'(t)}{2- e^{t/\alpha}} \mathrm{d}t.
    \end{align*}
\end{lemma}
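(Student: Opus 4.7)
The plan is to treat the equation from Lemma~\ref{equadiff_sol} as a first-order linear ODE in $b$ and solve it via an integrating factor, then extract the behaviour at $z = 1$ from the resulting integral representation. Dividing by $2 - e^{z/\alpha}$ puts the equation into the standard form $b'(z) + P(z) b(z) = Q(z)$ with
\[
P(z) = \frac{(z/\alpha^2 - 1/\alpha)\, e^{z/\alpha}}{2 - e^{z/\alpha}}, \qquad Q(z) = \frac{a'(z)}{2 - e^{z/\alpha}}.
\]
Since $e^{1/\alpha} = 2$, both coefficients are analytic on $[0,1)$ with a simple pole at $z = 1$. Setting $f(z) := \exp\bigl(\int_0^z P(s)\, ds\bigr)$ and using $b(0) = 0$, variation of parameters yields
\[
b(z) = \frac{1}{f(z)} \int_0^z \frac{f(t)\, a'(t)}{2 - e^{t/\alpha}}\, dt.
\]

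Next I will analyse $f$ near $z = 1$. A short local expansion, using $2 - e^{z/\alpha} = -\tfrac{2(z-1)}{\alpha}(1 + O(z-1))$, gives $P(z) = (1/\alpha - 1)/(1 - z) + R(z)$ with $R$ analytic on a neighbourhood of $[0,1]$. Hence $f(z) = (1-z)^{1-1/\alpha}\exp\bigl(\int_0^z R\bigr)$, and $f(z)/(1-z)^{1-1/\alpha}$ tends to a positive constant $C$ as $z \to 1^-$. To pin $C$ down explicitly, I will change variables via $w = e^{z/\alpha}$, which transforms $\int_0^z P\, ds$ into $-\int_1^{e^{z/\alpha}} (\ln w - 1)/(w - 2)\, dw$. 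Writing $(\ln w - 1)/(w - 2) = (\ln 2 - 1)/(w - 2) + (\ln w - \ln 2)/(w - 2)$ isolates the logarithmic singularity (which, after combining with $\ln(2 - e^{z/\alpha}) = \ln(1-z) + \ln(2/\alpha) + O(1-z)$, produces the factor $(2/\alpha)^{1-1/\alpha}(1-z)^{1-1/\alpha}$) from a convergent remainder. That remainder, via the substitution $u = 1 - w/2$, evaluates to the dilogarithm $\mathrm{Li}_2(1/2) = \pi^2/12 - (\ln 2)^2/2$. Collecting all factors should yield
\[
\frac{1}{C} = \Bigl(\frac{\alpha}{2}\Bigr)^{1-1/\alpha} e^{\mathrm{Li}_2(1/2)} = \frac{e^{\pi^2/12}\, \alpha^{1-1/\alpha}}{2^{1-1/(2\alpha)}},
\]
which is exactly the prefactor in the statement.

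Finally, I will check convergence of $\int_0^1 f(t) a'(t) / (2 - e^{t/\alpha})\, dt$: near $t = 1$ the integrand is $O\bigl((1-t)^{-1/\alpha}\bigr)$ because $f(t) \sim C(1-t)^{1-1/\alpha}$, $2 - e^{t/\alpha} \sim (2/\alpha)(1-t)$, and $a'(t)$ is bounded by Lemma~\ref{bound_an}; since $-1/\alpha = -\ln 2 > -1$, the integral is finite. Combining this with the limit of $f(z)/(1-z)^{1-1/\alpha}$ in the variation-of-parameters formula then gives $b(z) \sim \beta (1-z)^{-1+1/\alpha}$ with the stated $\beta$. The main obstacle is the constant-tracking in the middle step: carefully bookkeeping the various logarithmic contributions and, in particular, recognising the dilogarithm identity $\mathrm{Li}_2(1/2) = \pi^2/12 - (\ln 2)^2/2$. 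Once that computation is carried out, the remaining singularity extraction is essentially routine.
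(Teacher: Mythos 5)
Your proposal is correct and follows essentially the same route as the paper: variation of constants on the linear ODE from Lemma~\ref{equadiff_sol}, a local expansion of the integrating factor at $z=1$ giving the exponent $1-1/\alpha$, and integrability of the source term near $t=1$ since $1/\alpha=\ln 2<1$. Your explicit substitution $w=e^{z/\alpha}$ and the identity $\mathrm{Li}_2(1/2)=\pi^2/12-(\ln 2)^2/2$ supply the derivation of the constant $e^{\pi^2/12}\alpha^{1-1/\alpha}/2^{1-1/(2\alpha)}$ that the paper only asserts, and your $f=\exp\bigl(\int_0^z P\bigr)$ coincides with the paper's $1/g$, which is exactly the (otherwise undefined) $f(t)$ appearing in the statement's integral.
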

\begin{proof}
    The generic solution of the homogeneous differential equation
    \[
    \left( 2 - e^{z/\alpha} \right) y'(z)
    + \left( \frac{z}{\alpha^2} - \frac{1}{\alpha} \right)e^{z/\alpha} y(z)
    = 0
    \]
    is $y(z)=Cg(z)$ with 
    \[
    g(z)= \exp \left( -\int_{0}^{z} \left( \frac{t}{\alpha^2}
    - \frac{1}{\alpha} \right) \frac{e^{t/\alpha}}{ 2 - e^{t/\alpha}} \mathrm{d}t \right).
    \]
    By variation of constants we obtain 
    $C'(z)\cdot \left( 2 - e^{z/\alpha} \right) g(z)= a'(z)$
    and hence, as $b_0=0$, 
    \begin{equation} \label{h_int}
    b(z) = g(z) \int_0^z \frac{a'(t)}{\left(2- e^{t/\alpha}\right)g(t)} \mathrm{d}t.
    \end{equation} 
    
    Observe that $2 - e^{z/\alpha} \sim 2(1-z)/\alpha$, as $z\to 1$. This allows us to expand
    $g(z)$ asymptotically, which gives 
    \[
    g(z) \underset{z\rightarrow 1^-}\sim e^{\pi^2/12} \;
    \frac{\alpha^{1-1/\alpha}}{2^{1-1/(2\alpha)}} \;
    \left( 1 - z \right)^{-1+1/\alpha}. 
    \]
    Expanding $2-e^{z/\alpha}$ around $z=1$ 
    we obtain $1/(2-e^{z/\alpha}) \sim \alpha/2 \; (1-z)^{-1}$, as $z\rightarrow 1^-$. 
    So, from Lemma~\ref{bound_an} we deduce
    \[
    \frac{a'(z)}{\left(2- e^{z/\alpha}\right)g(z)} \underset{z\rightarrow 1^-}\sim
    \frac{ \alpha \; e^{-\pi^2/12} \; a'(1)}{2} \left( 1-z \right)^{-1/\alpha},
    \]
    which guarantees that the integral in~\eqref{h_int} is bounded for $0\le z\le 1$ and finally proves 
    the claimed result.
\end{proof}

\begin{proof}[Proof of Theorem~\ref{theo:nb_bin_tree}]
    Finally, using the result of Lemma~\ref{lem:asympt}
    and applying a standard Tauberian theorem, recalled in~\cite[Theorem VI.13]{FS09}, we deduce
    that 
    \[
    b_n \underset{n\rightarrow \infty}\sim \frac{\beta}{\Gamma(1-1/\alpha)} n^{-1/\alpha}.
    \]
    Replacing $\alpha$ by its value $1/\ln 2$ and defining the constant
    \[
    \eta = \frac{e^{\pi^2/12} \; \alpha^{1-1/\alpha}}{2^{1-1/(2\alpha)} \; \Gamma(1-1/\alpha)} \;
    \int_0^1 \frac{f(t) a'(t)}{2- e^{t/\alpha}} \mathrm{d}t,
    \]
    implies the main result after all.
\end{proof}

\section{Higher arity weakly increasing trees}\label{k_ary}

In this section we briefly discuss how our results generalize to $k$-ary weakly increasing trees
with repetitions. 
The definitions about the binary case can be adapted in an obvious way. 
The size of the structures corresponds to the number of leaves of the completed $k$-ary tree
and the generating function
$G(z)=\sum_{n\ge k} G_n z^n$ where $G_n$ is the number of $k$-ary weakly increasing trees
(with repetitions) of size $n$. In a similar
way as in the binary case we obtain the functional equation 
\[
G(z) = \frac{1}{2} \left(z^k + G(z+z^k)\right).
\]
We follow the same strategy as in the binary case to study the asymptotic behavior of $G_n$.
Recall the recurrence for the binary case:
$B_n=\sum_{\ell=1}^{\lfloor\frac n2\rfloor} \binom{n-\ell}{\ell} B_{n-\ell}$. It was obtained
through the following expansion
\begin{align*} 
B_n &=\sum_{\ell=1}^{\lfloor\frac n2\rfloor} B_{n-\ell} [z^n](z+z^2)^{n-\ell} 
=\sum_{\ell=1}^{\lfloor\frac n2\rfloor} B_{n-\ell} [z^{\ell}](1+z)^{n-\ell}. 
\end{align*}  
Thus we must replace the term $z^2$ on the right-hand side of the first equation by $z^k$. 
From this, we get the recurrence for the $k$-ary case: 
\begin{align} 
G_n&=\sum_{\ell=1}^{\lfloor n-\frac nk\rfloor} G_{n-\ell} [z^n](z+z^k)^{n-\ell} 
=\sum_{\ell=1}^{\lfloor n-\frac nk\rfloor} G_{n-\ell} [z^{\ell}](1+z^{k-1})^{n-\ell} \nonumber \\
&=\sum_{\begin{array}{c} \Sc \ell=1, \\ \Sc\ell\equiv 0\mod k-1\end{array}}^{\lfloor n-\frac nk\rfloor}
\binom{n-\ell}{\frac{\ell}{k-1}} G_{n-\ell} 
=\sum_{s=1}^{\lfloor \frac nk\rfloor} \binom{n-(k-1)s}s  G_{n-(k-1)s} \label{Bnlargek} \\
&=\sum_{\begin{array}{c} \Sc p=n-(k-1)\lfloor\frac nk\rfloor, \\ \Sc p\equiv n \mod k-1\end{array}}^{n-k+1}
\binom{p}{\frac{n-p}{k-1}} G_p. \nonumber
\end{align}
The expressions in the last two lines show that only particular terms of the sequence $(G_n)_{n\ge 0}$
are nonzero. This is not a surprise, since the arity constraint implies that $G_n\neq 0$ if and only if
$n\equiv 1\mod k-1$. Thus we set $H_n=G_{1+n(k-1)}$. From Equation~\eqref{Bnlargek} we obtain 
\begin{equation} \label{Crec}
H_n = \sum_{s=1}^{\lfloor n-\frac{n-1}k\rfloor} \binom{1+(n-s)(k-1)}s H_{n-s} = 
\sum_{s=\lceil \frac{n-1}k\rceil}^{n} \binom{1+s(k-1)}{n-s} H_s. 
\end{equation} 

Let us define $h_n$ as $H_n = h_n \; (k-1)^n \; (\ln 2)^{-n} \; n!$.
Then, $h_1 = \ln 2 / (k-1)$ and by~\eqref{Crec} we have 
\begin{equation} \label{hnrec}
h_n=\sum_{s=1}^{\lfloor n-\frac{n-1}k\rfloor}  \left( \frac{\ln 2}{k-1} \right)^s \frac{1}{s!} \; \delta_{n,s} \; h_{n-s},
\end{equation} 
where
\begin{equation}\label{delta}
\delta_{n,s} = \frac{(1+(n-s)(k-1))! \ (n-s)!}{(1+(n-s)(k-1)-s)! \; n!}.
\end{equation}

\bl
\label{delta_lemma}
Let $\delta_{n,s}$ be defined by \eqref{delta}. Then we have for $n>0$
\[
\delta_{n,1} = (k-1) \left( 1 - \frac{1}{n} + \frac{1}{n \; (k-1)} \right),  
\]
and for $1 < s \le \lfloor n - \frac{n-1}{k} \rfloor$,
\[
0 \le \delta_{n,s} \le (k-1)^s \left( 1 - \frac{s}{n} \right).
\]
\el
\bpf 
A direct calculation for $\delta_{n,1}$ gives the result.
Let us now prove by induction on $n$ that for all $s\in \{2, \dots, \lfloor n - (n-1)/k\rfloor\}$
we have $\delta_{n,s} \le (k-1)^s ( 1 - s/n )$. 
When $s=2$ (thus $n\ge 3$) we have an extremal case:
\begin{align*}
\delta_{n,2} &= \frac{((n-2)(k-1)+1)(n-2)(k-1)}{n(n-1)} \\
&= (k-1)^2\left( 1-\frac{2}{n} + \frac{1}{n(k-1)} \right) \left( 1 - \frac{1}{n-1}\right)\\
&= (k-1)^2\left( 1-\frac{2}{n} + \frac{(n-2)(2-k)}{n(n-1)(k-1)} \right).
\end{align*}
Since the last fraction is negative because $k\ge 3$, we obtain
\[
\delta_{n,2} \le (k-1)^2\left( 1-\frac{2}{n}\right).
\]
So the property is true when $n=3$, and $s=2$.
Let us suppose the property is true for $n$ and all
$2\leq s \leq \lfloor n - \frac{n-1}{k}\rfloor$.

Let $s \in \{3, \dots, \lfloor n+1 - \frac{n}{k}\rfloor\}$.
\[
\delta_{n+1,s} = \delta_{n,s-1} \frac{(n+1-s)(k-1)+2-s}{n+1}.
\]
Since $s\leq \lfloor n+1 - \frac{n}{k}\rfloor$, then $s-1 \leq  \lfloor n - \frac{n-1}{k}\rfloor$.
Thus we can use the property for $\delta_{n, s-1}$ and
\begin{align*}
\delta_{n+1,s} & \leq (k-1)^{s-1} \left( 1 - \frac{s-1}{n} \right) \frac{(n+1-s)(k-1)+2-s}{n+1} \\
& \leq (k-1)^{s} \left( 1 - \frac{s-1}{n} \right) \left( 1- \frac{s}{n+1} - \frac{s-2}{(n+1)(k-1)} \right)\\
& \leq (k-1)^{s} \left( 1- \frac{s}{n+1} \right).
\end{align*}
Thus the stated result is proved.
\epf

\begin{corollary}
    The sequence $(h_n)_{n\ge 0}$ defined by $h_0=0$, $h_1=\ln 2 / (k-1)$, and 
    Equation~\eqref{hnrec} for $n>1$, satisfies for all $n\in \mathbb{N}$ 
    \[
    0 \le h_n\le \frac{1}{n^{\ln 2}}.
    \]
\end{corollary}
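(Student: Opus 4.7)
The proof should follow the template of Lemma~\ref{bnbound}: induction on $n$ using the recurrence~\eqref{hnrec} together with the bounds on $\delta_{n,s}$ from Lemma~\ref{delta_lemma}. The novelty is that we must obtain the sharp exponent $\ln 2$ (rather than an unspecified small $\varepsilon$), so the proof must be quantitatively tight and will require hypothesis $k\ge 3$ in an essential way.

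First I would check the base cases $n=0,1$ (and a few more small values, if needed) directly. Then, assuming $h_j\le j^{-\ln 2}$ for every $j<n$, I plug the induction hypothesis into~\eqref{hnrec}, separating out the $s=1$ summand because Lemma~\ref{delta_lemma} gives a slightly different formula there. The identity $\delta_{n,1}=(k-1)(1-1/n)+1/n$ naturally splits the $s=1$ term into a ``generic'' part of the form $\tfrac{(\ln 2)^1}{1!}(1-1/n)h_{n-1}$, matching the shape of the $s\ge 2$ bound, plus a single correction $\tfrac{\ln 2}{n(k-1)}h_{n-1}$ carrying the anomaly.

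For the generic part (all $s\ge 1$), I would rewrite
\[
(1-s/n)(n-s)^{-\ln 2}=n^{-\ln 2}(1-s/n)^{1-\ln 2}
\]
and exploit the fact that $0<1-\ln 2<1$ makes $x\mapsto(1-x)^{1-\ln 2}$ concave on $[0,1]$. Bernoulli's inequality therefore yields $(1-s/n)^{1-\ln 2}\le 1-(1-\ln 2)s/n$. Extending the (positive, factorially decaying) sum to $s=\infty$ and using $e^{\ln 2}=2$ together with $\sum_{s\ge 1}s(\ln 2)^s/s!=2\ln 2$ collapses the generic contribution to
\[
n^{-\ln 2}\Bigl(1-\tfrac{2(1-\ln 2)\ln 2}{n}\Bigr).
\]
Adding the anomaly, bounded crudely by $\tfrac{\ln 2}{n(k-1)}(n-1)^{-\ln 2}\le \tfrac{\ln 2}{n(k-1)}n^{-\ln 2}(1+O(1/n))$, the induction closes as soon as
\[
\frac{\ln 2}{n(k-1)}\le \frac{2(1-\ln 2)\ln 2}{n},
\]
i.e.\ $k-1\ge 1/(2(1-\ln 2))\approx 1.63$, which is true for every $k\ge 3$. (This is consistent with the binary case $k=2$ being handled separately via Lemma~\ref{bnbound} with an unspecified $\varepsilon$.)

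The main obstacle is the bookkeeping of lower-order corrections: justifying that the extension of the summation range to infinity adds only a negligible quantity (the missing terms either decay factorially or contribute with favorable sign), and controlling the factor $(1-1/n)^{-\ln 2}=1+O(1/n)$ coming from $(n-1)^{-\ln 2}$ so that the inequality $\tfrac{1}{k-1}\le 2(1-\ln 2)$ still wins at every finite $n\ge n_0$. For the finitely many $n<n_0$ not covered by the asymptotic argument, I would simply verify the bound directly, enlarging the base of the induction as required.
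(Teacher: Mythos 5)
Your proposal is correct and follows essentially the same route as the paper: induction on $n$ via the recurrence~\eqref{hnrec}, the bounds of Lemma~\ref{delta_lemma}, isolating the $s=1$ term, and extending the sum so that the total coefficient of $n^{-\ln 2}$ is shown to be at most $1$. The only (minor) difference is how the induction is closed: you use the tangent-line bound $(1-s/n)^{1-\ln 2}\le 1-(1-\ln 2)s/n$ together with $\sum_{s\ge1}s(\ln 2)^s/s!=2\ln 2$, whereas the paper bounds $(1-s/n)^{1-\ln 2}$ by its value at $s=2$ and argues that the resulting expression increases to $1$ as $n\to\infty$; both exploit the same margin $1/(k-1)\le 2(1-\ln 2)$ supplied by $k\ge 3$.
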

\begin{proof}
    Let us prove the result by induction. Since $k\ge3$, the result is true for $h_1$.
    Suppose the result is correct until index $n-1$.
    We are now interested in $h_n$. Using Lemma~\ref{delta_lemma} yields
    \begin{align*}
    h_n &= \frac{\ln 2}{k-1} \delta_{n,1} h_ {n-1} + 
    \sum_{s=2}^{\lfloor n - \frac{n-1}{k-1}\rfloor} \frac{(\ln 2)^s}{(k-1)^s s!} 
    \delta_{n,s} h_{n-s}\\
    & \leq \frac{\ln 2}{(n-1)^{\ln 2}} \left( 1 - \frac{1}{n} + \frac{1}{n(k-1)} \right) + 
    \frac{1}{n^{\ln 2}} \sum_{s=2}^{\lfloor n+1 - \frac{n}{k-1}\rfloor}
    \frac{(\ln 2)^s}{s!}\left(1-\frac{s}{n}\right)^{1-\ln 2}\\
    & \leq \frac{1}{n^{\ln 2}} \left( \ln(2) \left(1-\frac{1}{n}\right)^{-\ln 2}
    \left( 1 - \frac{1}{2n} \right)
    + \left(1 - \frac{2}{n}\right)^{1-\ln 2} \left( 1 - \ln 2\right) \right).
    \end{align*}
    Since the second factor is an increasing function in $n$, 
    we get an upper bound when we replace it by its limit, as $n\rightarrow \infty$.
    But this limit is $1$, thus the result is proved.
\end{proof}

\begin{lemma}
    The asymptotic behavior of $\delta_{n,s}$ is
    \[
    \delta_{n,s} \sim (k-1)^s 
    \left( 1 - \frac{s((s+1)k-4)}{2n(k-1)} + O\left( \frac{s^4}{n^2}\right) \right),
    \qquad\text{as } n\to\infty. 
    \]
\end{lemma}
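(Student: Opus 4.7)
The plan is to write $\delta_{n,s}$ as an explicit product of $s$ linear factors, peel off the $(k-1)^s$ factor, and then expand the logarithm. Observe that
\[
\delta_{n,s} = \frac{(1+(n-s)(k-1))!}{(1+(n-s)(k-1)-s)!} \cdot \frac{(n-s)!}{n!} = \prod_{j=0}^{s-1} \frac{(n-s)(k-1) + 1 - j}{n-j}.
\]
Writing each factor as
\[
\frac{(n-s)(k-1) + 1 - j}{n-j} = (k-1)\left(1 - \frac{s(k-1) - j(k-2) - 1}{(k-1)(n-j)}\right) = (k-1)(1-\epsilon_{n,s,j})
\]
exhibits $\delta_{n,s} = (k-1)^s \prod_{j=0}^{s-1}(1-\epsilon_{n,s,j})$. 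For $s \le \lfloor n - (n-1)/k\rfloor$ (so $j \le s-1 \le n/2$ roughly) we have $\epsilon_{n,s,j} = O(s/n)$ uniformly.

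Next, I would take the logarithm and apply $\ln(1-x) = -x + O(x^2)$:
\[
\ln\delta_{n,s} - s\ln(k-1) = -\sum_{j=0}^{s-1}\epsilon_{n,s,j} + O\!\left(\sum_{j=0}^{s-1}\epsilon_{n,s,j}^2\right).
\]
The error sum is $O(s\cdot(s/n)^2)=O(s^3/n^2)$. To evaluate the main sum, I would expand $\tfrac{1}{n-j} = \tfrac{1}{n} + O(j/n^2)$, collect the $1/n$ part, and compute the resulting polynomial sum
\[
\sum_{j=0}^{s-1} \bigl[s(k-1) - j(k-2) - 1\bigr] = s^2(k-1) - \frac{(k-2)s(s-1)}{2} - s.
\]

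A routine simplification of the right-hand side yields $\tfrac{s(k(s+1)-4)}{2}$, which is the key arithmetic step and the only slightly delicate calculation. Dividing by $k-1$ and by $n$, this gives
\[
\sum_{j=0}^{s-1}\epsilon_{n,s,j} = \frac{s((s+1)k-4)}{2(k-1)n} + O\!\left(\frac{s^3}{n^2}\right),
\]
where the error absorbs both the $j/n^2$ correction (bounded by $\tfrac{1}{n^2}\sum j\cdot s = O(s^3/n^2)$) and the quadratic error from the logarithm expansion.

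Finally, exponentiating and using $e^u = 1 + u + O(u^2)$ with $u = -\tfrac{s((s+1)k-4)}{2(k-1)n} + O(s^3/n^2) = O(s^2/n)$, so $u^2 = O(s^4/n^2)$, yields
\[
\delta_{n,s} = (k-1)^s\left(1 - \frac{s((s+1)k-4)}{2(k-1)n} + O\!\left(\frac{s^4}{n^2}\right)\right),
\]
which is the claimed asymptotic expansion. The only genuine difficulty is the algebraic simplification in the arithmetic sum leading to the compact numerator $s((s+1)k-4)$; the error bookkeeping is otherwise straightforward since all corrections $\epsilon_{n,s,j}^2$, $j\epsilon_{n,s,j}/n$, and $u^2$ are uniformly $O(s^4/n^2)$ in the admissible range of $s$.
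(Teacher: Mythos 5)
Your proof is correct in substance and takes a genuinely different route from the paper. The paper disposes of this lemma in one line by noting that all four factorials in $\delta_{n,s}$ tend to infinity and invoking Stirling's formula with its $1/(12n)$ correction; your argument instead writes $\delta_{n,s}=\prod_{j=0}^{s-1}\frac{(n-s)(k-1)+1-j}{n-j}$, factors out $(k-1)$ from each term, and expands the logarithm. Your factorization $\frac{(n-s)(k-1)+1-j}{n-j}=(k-1)\bigl(1-\frac{s(k-1)-j(k-2)-1}{(k-1)(n-j)}\bigr)$ checks out, and the arithmetic sum does simplify to $\frac{s((s+1)k-4)}{2}$ (e.g.\ for $k=3$, $s=2$ it gives $5=\frac{2(9-4)}{2}$, consistent with the paper's exact formula for $\delta_{n,2}$). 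Your approach buys explicit, elementary control of the error terms and makes the uniformity in $s$ transparent, whereas the Stirling route requires juggling four asymptotic expansions and is harder to make uniform; arguably your proof is the more convincing of the two.

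One point you should patch: the bound $\ln(1-x)=-x+O(x^2)$ is uniform only for $x$ bounded away from $1$, and when $s$ is near its maximal value $\lfloor n-\frac{n-1}{k}\rfloor$ the last factors satisfy $1-\epsilon_{n,s,j}=\frac{(n-s)(k-1)+1-j}{(k-1)(n-j)}$, which can be as small as $\frac{1}{(k-1)(n-s+1)}$, so $\epsilon_{n,s,j}$ approaches $1$ and the logarithmic error is not $O(\epsilon^2)$ there. (Also, $s-1$ can be close to $\frac{k-1}{k}n$, not $n/2$.) The fix is a one-line case split: for $s\ge\sqrt{n}$ the claim is vacuous, since $\delta_{n,s}/(k-1)^s\in[0,1]$ by Lemma~\ref{delta_lemma} and the main term is $O(s^2/n)$, so the left-hand side is $O(s^2/n)=O(s^4/n^2)$ in that range; for $s\le\sqrt{n}$ all $\epsilon_{n,s,j}=O(s/n)=O(n^{-1/2})$ are uniformly small and your expansion is valid as written. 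With that remark added, the bookkeeping $\sum_j\epsilon_{n,s,j}^2=O(s^3/n^2)$, the $j/n^2$ correction, and the final exponentiation (where $|e^u-1-u|\le u^2/2$ for $u\le0$) all go through.
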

\begin{proof}
    In the definition of $\delta_{n,s}$ given in Equation~\eqref{delta}, we have $1\leq s\leq n -
    (n-1)/(k-1)$. A consequence of this is that all the factorials in \eqref{delta} tend to infinity 
    as $n$ tends to infinity. Thus we can use Stirling's formula and get 
    \[
    n! \underset{n\rightarrow \infty}= \sqrt{2\pi n} \left( \frac{n}{e} \right)^n 
    \left( 1 + \frac{1}{12 \; n} + \frac{1}{288 \; n^2}  + O\left(\frac{1}{n^3}\right) \right),
    \]
    and the result follows.
\end{proof}

We are now ready to define a new sequence $(g_n)_n$ such that
\begin{equation*} 
h_n = g_n + \sum_{s=1}^n \frac{(\ln 2)^s}{s!}
\left( 1 - \frac{s((s+1)k-4)}{2n(k-1)}\right) h_{n-s}.
\end{equation*}
Analogously to the binary case we obtain
\begin{equation*}
2n h_n = n g_n + n \sum_{s=0}^n \frac{(\ln 2)^s}{s!} h_{n-s}
- \frac{1}{2(k-1)} \sum_{s=0}^n \frac{(\ln 2)^s}{s!} s((s+1)k-4) h_{n-s}.
\end{equation*}
Thus we can translate this sequence into a differential equation for its generating function,
which after simplifications reads as 
\[
\left( 2 - 2^{z} \right) h'(z) 
+ \frac{ k \ln (2) z - 2}{2(k-1)} \ln (2) 2^z h(z)
= g'(z).
\]
After resolution we prove that there exist constants $\kappa, \kappa'$ such that
\begin{align*}
h(z)  \underset{z\rightarrow 1^-}\sim \kappa  \left( 2-2^z \right)^{\frac{k \ln (2) -2}{2(k-1)}}
\underset{z\rightarrow 1^-}\sim \kappa'  \left( 1-z \right)^{\frac{k \ln (2) -2}{2(k-1)}}.
\end{align*}
Through a Tauberian theorem we obtain
\[
h_n\simn K n^{\frac{2 - k \ln (2)}{2(k-1)}-1}.
\]
So, we get the following result after all. 

\bth
The number of $k$-ary weakly increasing trees with repetitions which have size $n$ is asymptotically
given by 
\[
G_n
\begin{cases}
=0 & \text{ if } n\not\equiv 1 \mod k-1, \\
\simn \eta_k \; m^{\frac{2 - k \ln (2)}{2(k-1)}} \(\frac{k-1}{\ln2}\)^m (m-1)! & \text{ if } n=1+(k-1)m.
\end{cases}
\]
\eth

\begin{figure}[htb]
    \begin{center}
        \includegraphics[width=0.7\textwidth]{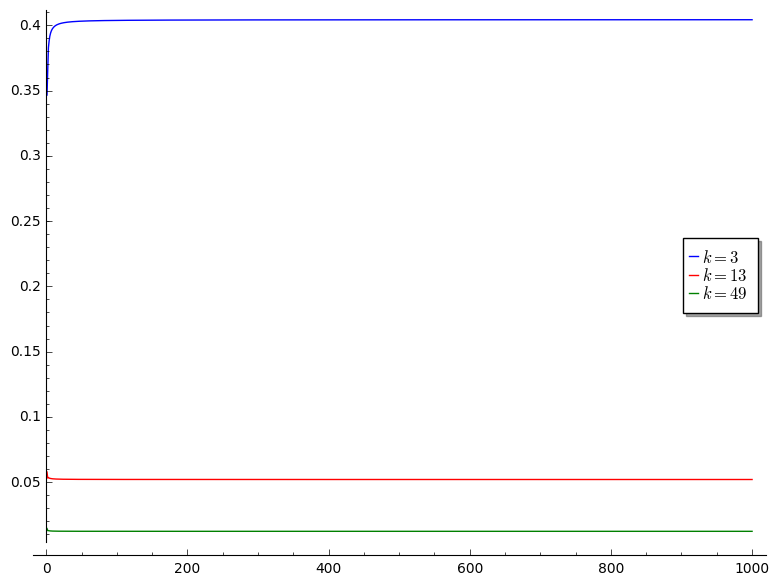}
    \end{center}
    \caption{The normalization of $B_n$ and their asymptotic behaviors for k=3, 13, 49.
        \label{fig:asympt_approx_general}}
\end{figure}
Observe that the latter theorem is coherent with the binary case after simplifications.
Furthermore for $k=3$,
the power of $m$ is approximately $0.0198...$ which can create some confusion
when we try to guess the asymptotic behavior!

\section{Conclusion}

In this article, we have shown that the asymptotic behavior of weakly
increasing binary trees of size $n$ is given by
\[
B_n \simn \eta \; n^{-\ln 2} \(\rcp{\ln 2}\)^n  (n-1)!
\]
where $\eta$ is a constant. This exhibits a certain oddity compared
to the classic asymptotic behavior of trees. In particular, the
presence of the polynomial factor $n^{-\ln(2)}$ is quite unusual and
can be compared to the classical factor $n^{-3/2}$ for the simple
family of trees.

To keep the presentation concise, we have performed only asymptotics up to the first order.
Nevertheless, the reader can easily check that the approach used can be applied to reach higher
orders. For example, for the binary case, we have this finer estimate:
\[
B_n = \eta \; n^{-\ln 2} \(\rcp{\ln 2}\)^n (n-1)! \left(1+\frac{\ln(2)}{2n}+O\left(\frac{1}{n^2}\right)\right).
\]

This refinement also provides a good idea of the speed of convergence to the
asymptotic regime. With a little more work the constant can be effectively evaluated:
$\eta\approx 0.647852\dots$.

Furthermore, we mention that our approach also allows studying characteristics of weakly
increasing trees. This gives rise to functional equations for bivariate generating functions
$f(z,u)$ and a Borel transform with respect to $z$ turns them into analytic functions in the
domain $|z|<\ln 2$ and $|u|\le 1$. Though we get formally a partial differential equation, the
singularity analysis has to be performed only with respect to $z$, which eventually leads to an
ordinary differential equation with an additional parameter. Thus we do not expect any major
problems as long as only moments are computed. For distributional results, however, certainly
uniformity of the approximations will be necessary, which might cause some technical challenges. 

The problems in the context of classical increasing trees can often be translated into the context
of urn models. There is a whole lot of literature on that topic and we refer, for example, to
the two papers that seem closest to our study. The first is by Mahmoud~\cite{Mahmoud03} and
relates urns and trees, the second by Flajolet~\emph{et al.}~\cite{FGP03} studies urns with
Analytic Combinatorics.

There is a way to encode our binary case problem as an urn model with a single color for balls
in the following way. \emph{Start with an urn with two balls. At each step, sample a subset of
    $r$ balls in the urns, and then return $2r$ balls in the urn. How may histories are there that
    give an urn containing $n$ balls?}

Unfortunately, to the best of our knowledge, urn processes with a random quantity of sampled
balls have not been studied yet. Thus it seems very promising to us to develop this new model
further.



\bibliographystyle{plain}
\bibliography{incr_trees_repet}

\end{document}